\documentclass[american]{article}
\usepackage{mathptmx}
\usepackage[T1]{fontenc}
\usepackage[latin9]{inputenc}
\usepackage{units}
\usepackage{amsmath}
\usepackage{amsthm}
\usepackage{amssymb}
\usepackage[all]{xy}

\makeatletter
\numberwithin{equation}{section}
\numberwithin{figure}{section}
\theoremstyle{plain}
\newtheorem{thm}{\protect\theoremname}[section]
  \theoremstyle{definition}
  \newtheorem{defn}[thm]{\protect\definitionname}
  \theoremstyle{remark}
  \newtheorem{rem}[thm]{\protect\remarkname}
  \theoremstyle{plain}
  \newtheorem{cor}[thm]{\protect\corollaryname}
  \theoremstyle{plain}
  \newtheorem{question}[thm]{\protect\questionname}
  \theoremstyle{plain}
  \newtheorem{prop}[thm]{\protect\propositionname}

\@ifundefined{date}{}{\date{}}
\makeatother

\usepackage{babel}
  \providecommand{\corollaryname}{Corollary}
  \providecommand{\definitionname}{Definition}
  \providecommand{\propositionname}{Proposition}
  \providecommand{\questionname}{Question}
  \providecommand{\remarkname}{Remark}
\providecommand{\theoremname}{Theorem}

\begin{document}

\title{On Semi-Rational Groups}

\author{Tzoor Plotnikov}

\maketitle
\begin{sloppy}
{\let\thefootnote\relax\footnotetext{The author acknowledges the support of ISF 1031/17 grant of Ori Parzanchevski}}
\begin{abstract}
A finite group is called \emph{semi-rational }if the distribution
induced on it by any word map is a virtual character. In \cite{amit2011characters}
Amit and Vishne give a sufficient condition for a group to be semi-rational,
and and ask whether it is also necessary. We answer this in the negative,
by exhibiting two new criteria for semi-rationality, each giving rise
to an infinite family of semi-rational groups which do not satisfy
the Amit-Vishne condition. On the other hand, we use recent work of
Lubotzky to show that for finite simple groups the Amit-Vishne condition
is indeed necessary, and we use this to construct the first known
example of an infinite family of non-semi-rational groups.
\end{abstract}

\section{Introduction\label{sec:Introduction}}

For a finite group $G$ and a word $w=x_{i_{1}}^{n_{1}}x_{i_{2}}^{n_{2}}\cdots x_{i_{\ell}}^{n_{\ell}}$
in the free group on $r$ generators one can define the ``word map''
$w:G^{r}\rightarrow G$ by
\[
w(g_{1},\cdots,g_{r})=g_{i_{1}}^{n_{1}}g_{i_{2}}^{n_{2}}\cdots g_{i_{\ell}}^{n_{\ell}},
\]
namely, substituting each $x_{i}$ by $g_{i}$. One defines $N_{w,G}:G\rightarrow\mathbb{C}$
by 
\[
N_{w,G}(g)=|\{(g_{1},\cdots,g_{r})\in G^{r}:\ w(g_{1},\cdots,g_{r})=g\}|,
\]
the distribution which $w$ induces on $G$.

For every automorphism $\alpha\in Aut(G)$ and every $g\in G$ there
is a bijection between the solution set of $w=g$ and the solution
set of $w=\alpha(g)$ given by $(g_{1},\cdots,g_{r})\mapsto(\alpha(g_{1}),\cdots\alpha(g_{r}))$.
In particular, $N_{w,G}$ is a class function of $G$, and can be
written as 
\[
N_{w,G}=\sum_{\chi\in Irr(G)}N_{w,G}^{\chi}\cdot\chi
\]
where by orthogonality of characters
\[
N_{w,G}^{\chi}=\langle N_{w,G},\chi\rangle=\frac{1}{|G|}\sum_{g\in G}N_{w,G}(g)\overline{\chi}(g)=\frac{1}{|G|}\sum_{(g_{1},\cdots,g_{r})\in G^{r}}\overline{\chi}(w(g_{1},\cdots,g_{r})),
\]
with $\overline{\chi}$ being the complex conjugate of $\chi$. If
for every irreducible character $N_{w,G}^{\chi}\in\mathbb{N}$ then
$N_{w,G}$ is a character of the group $G$, and if $N_{w,G}^{\chi}\in\mathbb{Z}$
then $N_{w,G}$ is a difference of characters and we call it a generalized
character, or a virtual character of $G$.
\begin{defn}
\label{def:SR_group}A finite group $G$ is called semi-rational if
$N_{w,G}$ is a generalized character for every $r\in\mathbb{N}$
and every $w\in F_{r}$.
\end{defn}
In \cite{amit2011characters} Amit and Vishne provided several results
on word maps from character theoretic point of view. They used an
argument by Stanley (\cite{stanley1986enumerative}, Exercise 7.69.j)
to show
\begin{thm}[\cite{amit2011characters}, Proposition 3.2]
\label{thm:AV-fourier-coeff} For every $w\in F_{r}$ and $\chi\in Irr(G)$
one has $N_{w,G}^{\chi}\in\mathbb{Z}[\{\chi(g)|\ g\in G\}]$. In particular,
$N_{w,G}^{\chi}\in\mathbb{Z}[\omega]$ where $\omega$ is a primitive
root of unity of order $|G|$.\end{thm}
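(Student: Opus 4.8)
The plan is to establish the two assertions separately: first, that $N_{w,G}^{\chi}$ is an algebraic integer — which, since it manifestly lies in $\mathbb{Q}(\omega)$ and $\mathcal{O}_{\mathbb{Q}(\omega)}=\mathbb{Z}[\omega]$, is exactly the ``in particular'' clause — and second, that it is fixed by every Galois automorphism fixing the values of $\chi$, which pins it down to the subfield $\mathbb{Q}(\{\chi(g)\})$. The integrality is the substantive point; the Galois computation explains why only the values of $\chi$, rather than all of $\mathbb{Z}[\omega]$, are needed.

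For the Galois part, write $\Gamma=\mathrm{Gal}(\mathbb{Q}(\omega)/\mathbb{Q})$ and, for $t$ coprime to $|G|$, let $\sigma_{t}\in\Gamma$ be $\omega\mapsto\omega^{t}$. Since the eigenvalues of any $\rho(g)$ are $|G|$-th roots of unity, one has the standard identity $\sigma_{t}(\psi(h))=\psi(h^{t})$ for every character $\psi$ and every $h\in G$; applied to $\overline{\chi}$ this gives $\sigma_{t}(\overline{\chi}(h))=\overline{\chi}(h^{t})$. Feeding this into the displayed formula for $N_{w,G}^{\chi}$, and using $w(g_{1},\dots,g_{r})^{t}=w^{t}(g_{1},\dots,g_{r})$, where $w^{t}\in F_{r}$ is the $t$-fold concatenation of $w$, I obtain
\[
\sigma_{t}\big(N_{w,G}^{\chi}\big)=\tfrac{1}{|G|}\sum_{(g_{1},\dots,g_{r})\in G^{r}}\overline{\chi}\big(w(g_{1},\dots,g_{r})^{t}\big)=N_{w^{t},G}^{\chi}.
\]
Now suppose $\sigma_{t}$ fixes all values of $\chi$, i.e. $\chi(h^{t})=\chi(h)$, equivalently $\overline{\chi}(h^{t})=\overline{\chi}(h)$, for every $h\in G$. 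Then the same substitution $h\mapsto h^{t}$ inside $\overline{\chi}$ shows $N_{w^{t},G}^{\chi}=N_{w,G}^{\chi}$, so $\sigma_{t}$ fixes $N_{w,G}^{\chi}$. As this holds throughout the subgroup of $\Gamma$ fixing $\mathbb{Q}(\{\chi(g)\})$, Galois theory places $N_{w,G}^{\chi}$ in $\mathbb{Q}(\{\chi(g)\})$.

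For the integrality part — the main obstacle — note that each $\overline{\chi}(w(\vec g))$ is an algebraic integer, so $\sum_{\vec g}\overline{\chi}(w(\vec g))\in\mathbb{Z}[\omega]$, and the entire difficulty is to show that dividing by $|G|$ leaves an algebraic integer. A first, insufficient, observation is that $\mathbf{w}=\sum_{\vec g}w(\vec g)$ is a central element of $\mathbb{Z}[G]$, hence integral over $\mathbb{Z}$, so the scalar by which it acts on the irreducible affording $\overline{\chi}$, namely $\tfrac{|G|}{\chi(1)}N_{w,G}^{\chi}$, is an algebraic integer; this controls $N_{w,G}^{\chi}$ only up to the factor $\chi(1)/|G|$ and so does not by itself give integrality. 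The genuine integrality is extracted by summing the variables out one at a time: fixing all but one generator, its occurrences in $w$ form a pattern into which one inserts the averaging operator $\tfrac{1}{|G|}\sum_{g}\rho(g)^{\epsilon_{1}}\otimes\cdots$, which is the orthogonal projection onto the $G$-invariants of the corresponding tensor product of copies of $V$ and $V^{*}$ (for a single conjugation pattern $g\cdots g^{-1}$ this is Schur's lemma, producing a factor $1/\chi(1)$). Iterating reduces $N_{w,G}^{\chi}$ to a trace of a fixed permutation operator against a product of such projectors on a tensor power of the module $V$ affording $\overline{\chi}$, an expression built entirely from the values of $\overline{\chi}$ and hence, since $\chi(g^{-1})=\overline{\chi(g)}$, from the values of $\chi$. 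The key point is that the $1/\chi(1)$ denominators introduced along the way cancel only globally, through the orthogonality relations — exactly as in the commutator word $w=x_{1}x_{2}x_{1}^{-1}x_{2}^{-1}$, where summing the first variable by Schur's lemma yields $\tfrac{1}{\chi(1)}\sum_{g}|\chi(g)|^{2}=|G|/\chi(1)$, an integer by the theorem that $\chi(1)$ divides $|G|$.

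Carrying out this bookkeeping so that the final answer is certified to lie in $\mathbb{Z}[\{\chi(g)\}]$ — rather than merely in the ring of integers of $\mathbb{Q}(\{\chi(g)\})$ — is precisely the content of Stanley's Exercise 7.69.j, and is the step I expect to be most delicate: one must track the integral, not merely rational, structure through the iterated projections and invoke the algebraic-integrality of the central characters $|C|\chi(g_{C})/\chi(1)$ to absorb the surviving dimension denominators. Once integrality is established, combining it with the Galois computation gives $N_{w,G}^{\chi}\in\mathbb{Z}[\{\chi(g)\}]$; and since every $\chi(g)$ is a sum of $|G|$-th roots of unity, $\mathbb{Z}[\{\chi(g)\}]\subseteq\mathbb{Z}[\omega]$, which yields the final clause.
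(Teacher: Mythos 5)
Your proposal does not actually prove the theorem: the substantive claim --- that $N_{w,G}^{\chi}$ is an algebraic integer, and more precisely lies in $\mathbb{Z}[\{\chi(g)\}]$ --- is exactly the part you leave undone. The Galois computation you carry out is correct ($\sigma_{t}$ fixing all values of $\chi$ fixes $N_{w,G}^{\chi}$, since $\sigma_{t}(\overline{\chi}(h))=\overline{\chi}(h^{t})$), but it only localizes $N_{w,G}^{\chi}$ to the field $\mathbb{Q}(\{\chi(g)\})$; everything then hinges on integrality, for which you offer a sketch (iterated averaging projectors, Schur's lemma) that ends with the admission that the $1/\chi(1)$ denominators ``cancel only globally'' and that the required bookkeeping ``is precisely the content of Stanley's Exercise 7.69.j.'' Since the theorem being proved \emph{is} Amit--Vishne's Proposition 3.2, whose proof \emph{is} that exercise of Stanley, deferring the delicate step to it is circular: you have restated the problem, not solved it. Concretely, the missing idea is the reduction of $\sum_{\vec{g}}\overline{\chi}(w(\vec{g}))$ to an explicit polynomial with integer coefficients in the values of $\chi$ --- e.g.\ by summing out one variable at a time and expressing the partial sums through quantities such as $\nu_{n}(\chi)=\frac{1}{|G|}\sum_{h}\chi(h^{n})\in\mathbb{Z}$ and $|G|/\chi(1)\in\mathbb{Z}$ --- and no step of your outline actually exhibits such a polynomial or controls the denominators.

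A secondary structural point: even if both halves of your plan were completed, ``algebraic integer lying in $\mathbb{Q}(\{\chi(g)\})$'' only places $N_{w,G}^{\chi}$ in the ring of integers of that field, which in general may properly contain $\mathbb{Z}[\{\chi(g)\}]$; so the two-part strategy is in principle incapable of delivering the precise statement. (It would deliver the ``in particular'' clause, and --- as it happens --- enough for the applications in this paper, where the relevant characters are rational-valued so that the ring of integers in question is just $\mathbb{Z}$.) Note also that the paper does not reprove this theorem: it is quoted from \cite{amit2011characters}, with Remark \ref{rem:AV-fourier-coeff} only observing that the proof there yields the sharper ring $\mathbb{Z}[\{\chi(g)\}]$, so the comparison here is against that source rather than against an argument in the text.
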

\begin{rem}
\label{rem:AV-fourier-coeff}The theorem Amit and Vishne prove in
their work actually states that $N_{w,G}^{\chi}\in\mathbb{Z}[\{\psi(g)|\ g\in G,\ \psi\in Irr(G)\}]$,
but examination of the proof shows that the stronger version given
here is still viable, and we will use this stronger version in Proposition
\ref{prop:character-condition-SR}.
\end{rem}
They use this theorem to prove a necessary and sufficient condition
for a group to be semi-rational:
\begin{thm}[\cite{amit2011characters}, Corollary 3.3]
\label{thm:AV} $N_{w,G}$ is a generalized character if and only
if for every $g,h\in G$ generating the same cyclic subgroup $N_{w,G}(g)=N_{w,G}(h)$.
\end{thm}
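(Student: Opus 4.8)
The plan is to reduce both implications to a single \emph{rationality} statement about the Fourier coefficients $N_{w,G}^{\chi}$ and then to read off that rationality from the Galois action on character values. By Theorem~\ref{thm:AV-fourier-coeff} each $N_{w,G}^{\chi}$ is an algebraic integer lying in $\mathbb{Z}[\omega]$, where $\omega$ is a primitive $|G|$-th root of unity. Since an algebraic integer is a rational integer exactly when it is rational, $N_{w,G}$ is a generalized character if and only if every $N_{w,G}^{\chi}$ is fixed by $\Gamma:=\mathrm{Gal}(\mathbb{Q}(\omega)/\mathbb{Q})\cong(\mathbb{Z}/|G|\mathbb{Z})^{\times}$. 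So the whole theorem becomes: $\Gamma$ fixes all Fourier coefficients if and only if $N_{w,G}$ is constant on the generators of each cyclic subgroup.

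Next I would compute the Galois action explicitly. For $k$ coprime to $|G|$ let $\sigma_{k}\in\Gamma$ be the automorphism with $\sigma_{k}(\omega)=\omega^{k}$. The standard fact that $\sigma_{k}(\psi(h))=\psi(h^{k})$ for \emph{every} character $\psi$ (the eigenvalues of $h$ are roots of unity, and $\sigma_{k}$ raises each to the $k$-th power, while these are exactly the eigenvalues of $h^{k}$) applies in particular to $\psi=\overline{\chi}$. Applying $\sigma_{k}$ to the defining sum for $N_{w,G}^{\chi}$ and then grouping the tuples $(g_{1},\dots,g_{r})$ according to the value $g=w(g_{1},\dots,g_{r})$ gives
\[
\sigma_{k}(N_{w,G}^{\chi})=\frac{1}{|G|}\sum_{g\in G}N_{w,G}(g)\,\overline{\chi}(g^{k}).
\]
Since $\gcd(k,|G|)=1$ the map $g\mapsto g^{k}$ is a bijection of $G$; writing $k'$ for the inverse of $k$ modulo $|G|$ and substituting $g\mapsto g^{k'}$, the right-hand side becomes $\tfrac{1}{|G|}\sum_{g}N_{w,G}(g^{k'})\overline{\chi}(g)=\langle N_{w,G}^{(k')},\chi\rangle$, where I set $N_{w,G}^{(k')}(g):=N_{w,G}(g^{k'})$.

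From here the equivalence is immediate. The coefficient $N_{w,G}^{\chi}$ is fixed by all $\sigma_{k}$ if and only if $\langle N_{w,G}^{(k')},\chi\rangle=\langle N_{w,G},\chi\rangle$ for every $\chi\in Irr(G)$ and every unit $k'$; because the irreducible characters form a basis of the space of class functions, this holds (for all $\chi$) exactly when $N_{w,G}^{(k')}=N_{w,G}$, i.e.\ $N_{w,G}(g^{k'})=N_{w,G}(g)$ for all $g\in G$. As $k'$ ranges over $(\mathbb{Z}/|G|\mathbb{Z})^{\times}$, the elements $g^{k'}$ run precisely through the generators of $\langle g\rangle$: any generator of $\langle g\rangle$ is $g^{m}$ with $\gcd(m,\mathrm{ord}(g))=1$, and by the Chinese Remainder Theorem $m$ can be adjusted modulo $\mathrm{ord}(g)$ to a unit modulo the full order $|G|$. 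Thus $\Gamma$-invariance of all Fourier coefficients is exactly the condition that $N_{w,G}(g)=N_{w,G}(h)$ whenever $\langle g\rangle=\langle h\rangle$, and combining this with the first paragraph proves the theorem in both directions at once.

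The computation is short once the two inputs are in place: the integrality of Theorem~\ref{thm:AV-fourier-coeff}, which is what upgrades ``rational'' to ``integral'', and the power-map description of the Galois action. I expect the only genuinely delicate point to be the final bookkeeping step — checking that every generator of $\langle g\rangle$ is realized as $g^{k'}$ for a unit $k'$ modulo the \emph{whole} group order $|G|$, rather than merely modulo $\mathrm{ord}(g)$ — since it is this global coprimality that lets the single group $\Gamma$ govern all cyclic subgroups simultaneously.
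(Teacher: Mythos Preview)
Your argument is correct. The paper does not actually supply its own proof of this statement---it is quoted as \cite{amit2011characters}, Corollary~3.3---but the paper does indicate that Amit and Vishne deduce it from Theorem~\ref{thm:AV-fourier-coeff}, and that is exactly what you do: use the integrality from Theorem~\ref{thm:AV-fourier-coeff} to reduce ``$N_{w,G}^{\chi}\in\mathbb{Z}$'' to ``$N_{w,G}^{\chi}\in\mathbb{Q}$'', and then analyse rationality via the Galois action $\sigma_{k}(\chi(g))=\chi(g^{k})$ on $\mathbb{Q}(\omega)$. The computation $\sigma_{k}(N_{w,G}^{\chi})=\langle N_{w,G}^{(k')},\chi\rangle$ is clean (note $N_{w,G}(g)\in\mathbb{Z}$ is Galois-fixed, and $N_{w,G}^{(k')}$ is again a class function since $(hgh^{-1})^{k'}=hg^{k'}h^{-1}$), and your use of the orthogonality of irreducible characters to pass from equality of all Fourier coefficients to equality of the class functions is the right move. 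The lifting step you flagged---upgrading a unit modulo $\mathrm{ord}(g)$ to a unit modulo $|G|$---is indeed the only point needing a word of care, and your CRT justification is valid. So your proof is complete and matches the intended approach.
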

They get as a corollary
\begin{cor}[\cite{amit2011characters}, Proposition 3.5]
\label{cor:AV-condition} If $g,h\in G$ lie in the same orbit of
$Aut(G)$ whenever they generate the same cyclic subgroup then $G$
is semi-rational.
\end{cor}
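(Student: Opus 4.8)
The plan is to derive the corollary directly from the necessary-and-sufficient condition of Theorem \ref{thm:AV}, using the automorphism-invariance of $N_{w,G}$ that was already recorded in the introduction. The only ingredient beyond those two facts is checking that the quantifiers line up, so this should be a short deduction rather than a calculation.

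First I would recall the invariance: for any $\alpha\in Aut(G)$ the map $(g_{1},\cdots,g_{r})\mapsto(\alpha(g_{1}),\cdots,\alpha(g_{r}))$ is a bijection between the solution set of $w=g$ and that of $w=\alpha(g)$, whence
\[
N_{w,G}(g)=N_{w,G}(\alpha(g))\qquad\text{for every }g\in G,\ \alpha\in Aut(G),\ w\in F_{r}.
\]
In words, $N_{w,G}$ is constant on each $Aut(G)$-orbit.

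Next I would fix an arbitrary word $w\in F_{r}$ and take any pair $g,h\in G$ generating the same cyclic subgroup. By the hypothesis of the corollary, such a pair lies in a single $Aut(G)$-orbit, so there exists $\alpha\in Aut(G)$ with $h=\alpha(g)$; the invariance above then gives $N_{w,G}(g)=N_{w,G}(h)$. Since $g$ and $h$ were an arbitrary pair of generators of a common cyclic subgroup, the criterion of Theorem \ref{thm:AV} is met, and therefore $N_{w,G}$ is a generalized character. As $w$ and $r$ were arbitrary, $G$ is semi-rational in the sense of Definition \ref{def:SR_group}.

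The one point requiring care — and the only place the argument could go wrong — is the logical direction of the hypothesis. Theorem \ref{thm:AV} needs the implication ``generate the same cyclic subgroup $\Rightarrow$ $N_{w,G}$ agrees,'' whereas the corollary supplies ``generate the same cyclic subgroup $\Rightarrow$ same $Aut(G)$-orbit.'' I would emphasize that the second implication feeds into the first precisely because orbit-equality upgrades to value-equality via the invariance step, so the two conditions compose in the correct order; no converse is needed.
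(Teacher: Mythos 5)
Your deduction is correct and is exactly the intended argument: the paper states this corollary without proof (citing Amit--Vishne), but the two ingredients you combine --- the $Aut(G)$-invariance of $N_{w,G}$ recorded in the introduction and the criterion of Theorem \ref{thm:AV} --- are precisely what it relies on. No gaps; the quantifier check you flag is handled correctly.
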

We call the condition of Corollary \ref{cor:AV-condition} the \emph{Amit-Vishne
condition}. In their article Amit and Vishne asked 
\begin{question}[\cite{amit2011characters}, Question 3.9]
\label{ques:AV-condition} Is the Amit-Vishne condition also necessary
for a group to be semi-rational?
\end{question}
We provide in Section \ref{sub:The-Counter-Examples} two infinite
families of semi-rational groups which do not satisfy the Amit-Vishne
condition, answering Question \ref{ques:AV-condition} in the negative.
These examples are based on Proposition \ref{prop:character-condition-SR}
and Corollary \ref{cor:Ori-condition}, which gives new sufficient
conditions for semi-rationality. We also use those new conditions
to show that every group of order $pq$ is semi-rational.

On the other hand, we prove in Section \ref{sub:The-Finite-Simple}
that the Amit-Vishne condition is indeed necessary for finite simple
groups, and use this result to provide the first example of an infinite
family of groups which are not semi-rational.

In Section \ref{sec:Further-Questions} we provide several further
questions and conjectures on the subject.

\section{On The Amit-Vishne Condition\label{sec:On-The-Amit-Vishne}}

\subsection{The Counter Examples\label{sub:The-Counter-Examples}}

In \cite{amit2011characters}, Amit and Vishne ask whether the condition
in Corollary \ref{cor:AV-condition} is also necessary for a group
to be semi-rational. We present in this section two infinite families
of semi-rational groups which do not satisfy the Amit-Vishne condition.
In order to prove the semi-rationality of the groups, we prove new
conditions for semi-rationality.

The first one is:
\begin{prop}
\label{prop:character-condition-SR} For $G$ a finite group, if every
irreducible character of degree $\geq2$ takes values only in $\mathbb{Z}$
then $G$ is semi-rational.\end{prop}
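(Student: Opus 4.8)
The plan is to use the Amit-Vishne criterion (Theorem~\ref{thm:AV}) as the target: I need to show that whenever $g,h \in G$ generate the same cyclic subgroup, $N_{w,G}(g) = N_{w,G}(h)$ for every word $w$. Rather than compute $N_{w,G}$ directly, I would pass through its Fourier coefficients, writing $N_{w,G}(g) = \sum_{\chi} N_{w,G}^\chi \chi(g)$. So it suffices to show that $\sum_\chi N_{w,G}^\chi \chi(g) = \sum_\chi N_{w,G}^\chi \chi(h)$ whenever $\langle g\rangle = \langle h\rangle$, and I would try to prove this character-by-character, i.e. show $N_{w,G}^\chi \chi(g) = N_{w,G}^\chi \chi(h)$ for each irreducible $\chi$.

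First I would split the irreducible characters of $G$ into two classes: the linear characters (degree $1$) and those of degree $\geq 2$. For the degree-$\geq 2$ characters, the hypothesis says $\chi$ takes only rational integer values. If $\langle g\rangle = \langle h\rangle$ then $g$ and $h$ are Galois-conjugate in the sense that $h = g^k$ with $\gcd(k,|g|)=1$, and any such $g,h$ have the same $\chi$-value for a \emph{rational-valued} character (this is the standard fact that a character is rational-valued iff it is constant on the classes of elements generating the same cyclic subgroup). Hence $\chi(g) = \chi(h)$ outright, and the contribution of these characters to $N_{w,G}(g)$ and $N_{w,G}(h)$ already agrees, regardless of the coefficient $N_{w,G}^\chi$.

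The delicate part, and what I expect to be the main obstacle, is the linear characters, where $\chi$ need not be rational-valued. Here the idea is to play off the coefficient $N_{w,G}^\chi$ against the value $\chi(g)$ using the integrality Theorem~\ref{thm:AV-fourier-coeff} in its strengthened form (Remark~\ref{rem:AV-fourier-coeff}): for each $\chi$ one has $N_{w,G}^\chi \in \mathbb{Z}[\{\chi(a) : a \in G\}]$. For a linear character $\chi$, the set of values $\{\chi(a)\}$ is a cyclic group of roots of unity, so $\mathbb{Z}[\{\chi(a)\}]$ is a cyclotomic ring $\mathbb{Z}[\zeta]$ on which the Galois group acts. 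The key observation is that the map sending $g$ to $h = g^k$ corresponds to an element $\sigma$ of $\mathrm{Gal}(\mathbb{Q}(\zeta)/\mathbb{Q})$ acting as $\sigma(\chi(g)) = \chi(g)^k = \chi(g^k) = \chi(h)$, and I would aim to show that $N_{w,G}^\chi$ is \emph{fixed} by exactly the relevant Galois action so that applying $\sigma$ to the scalar identity $N_{w,G}^\chi = N_{w,G}^{\chi^k}$ (via the summation formula for $N_{w,G}^\chi$, since replacing $\chi$ by $\chi^k$ permutes the summands) yields $\sigma(N_{w,G}^\chi \chi(g)) = N_{w,G}^\chi \chi(h)$ while the left side equals the honest integer $N_{w,G}(g)$ restricted to the linear part.

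To make the bookkeeping clean I would instead argue globally: let $\sigma \in \mathrm{Gal}(\mathbb{Q}(\omega)/\mathbb{Q})$ be the automorphism with $\sigma(\omega) = \omega^k$ where $\omega$ has order $|G|$. Since $N_{w,G}(g) = \sum_{(g_i)} [w(g_i) = g]$ is a non-negative \emph{integer}, it is $\sigma$-fixed; on the other hand $\sigma$ permutes group elements in a way compatible with the power map, and I would show $\sigma(N_{w,G}(g)) = N_{w,G}(g^k) = N_{w,G}(h)$ by exploiting that $\sigma$ sends each character value $\chi(g)$ to $\chi^{(k)}(g)$ and permutes $\mathrm{Irr}(G)$ together with the coefficients $N_{w,G}^\chi$ as dictated by Theorem~\ref{thm:AV-fourier-coeff}. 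The rationality hypothesis on the degree-$\geq 2$ characters is exactly what guarantees that this Galois twist only shuffles the \emph{linear} characters among themselves and fixes everything else, so that the combined effect reproduces $N_{w,G}(h)$. Establishing that the Galois action on the Fourier coefficients is governed by this power map, and that it closes up correctly on the linear characters, is the crux; once it is in place, $N_{w,G}(g) = \sigma(N_{w,G}(g)) = N_{w,G}(h)$ and Theorem~\ref{thm:AV} finishes the proof.
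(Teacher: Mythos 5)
There is a genuine gap, and it sits exactly where you locate ``the crux'': the linear characters. Your treatment of the characters of degree $\geq2$ is fine (rationality of $\chi$ forces $\chi(g)=\chi(g^{k})$), but the Galois argument you propose for the linear part is circular. The correct Galois equivariance, read off from the summation formula for the coefficients, is $\sigma\bigl(N_{w,G}^{\chi}\bigr)=N_{w,G}^{\chi^{\sigma}}$; hence applying $\sigma$ to $N_{w,G}(g)=\sum_{\chi}N_{w,G}^{\chi}\chi(g)$ simultaneously replaces $\chi(g)$ by $\chi(h)$ \emph{and} permutes the coefficients, and the two effects cancel: you recover only the tautology $\sigma(N_{w,G}(g))=N_{w,G}(g)$, with no information about $N_{w,G}(h)$. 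The identity $N_{w,G}^{\chi}=N_{w,G}^{\chi^{k}}$ that you want to feed into this is precisely equivalent to the rationality of $N_{w,G}^{\chi}$, i.e.\ it is the statement to be proved, and your justification (``replacing $\chi$ by $\chi^{k}$ permutes the summands'') is not valid: it replaces each summand $\overline{\chi}(w(g_{1},\dots,g_{r}))$ by its $k$-th power, which is not a permutation of the original multiset. That no combination of your stated ingredients can close the argument is shown by the class function $f=\zeta\chi_{1}+\zeta^{2}\chi_{2}$ on $C_{3}=\langle x\rangle$, where $\chi_{j}(x)=\zeta^{j}$ and $\zeta=e^{2\pi i/3}$: it is integer-valued ($f(1)=f(x)=-1$, $f(x^{2})=2$), its coefficients lie in $\mathbb{Z}[\zeta]$ and are permuted Galois-equivariantly, every character involved is linear, and yet $f(x)\neq f(x^{2})$ although $x$ and $x^{2}$ generate the same subgroup.

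The missing idea --- and the paper's actual mechanism --- is that for a \emph{linear} character $\chi$ with kernel $K$ one has $N_{w,G}^{\chi}=|K|^{r-1}N_{w,\nicefrac{G}{K}}^{\tilde{\chi}}$, because $N_{w,G}$ pushes forward along $G\rightarrow\nicefrac{G}{K}$ to $|K|^{r}N_{w,\nicefrac{G}{K}}$ and $\chi$ only sees $\nicefrac{G}{K}$. Since $\nicefrac{G}{K}$ is abelian, the word map there is a group homomorphism, so its fibre-counting function is an integer multiple of the indicator of the image subgroup and all its Fourier coefficients are integers. Combined with Theorem \ref{thm:AV-fourier-coeff} (in the strengthened form of Remark \ref{rem:AV-fourier-coeff}) applied to the degree $\geq2$ characters, every coefficient $N_{w,G}^{\chi}$ is an integer and $N_{w,G}$ is a generalized character outright --- there is no need to route through Theorem \ref{thm:AV} at all.
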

\begin{proof}
Using Theorem \ref{thm:AV-fourier-coeff}, it is enough to show that
for every word $w\in F_{r}$ and every one-dimensional character $\chi$
of $G$, one has $N_{w,G}^{\chi}\in\mathbb{Z}$.

For $\chi$ and $w$ as above, denote by $K$ the kernel of $\chi$,
and write $\tilde{N}_{w,G}(gK)=\frac{1}{|K|}\sum_{k\in K}N_{w,G}(gk)$.
Then one has
\[
N_{w,G}^{\chi}=\left\langle N_{w,G},\chi\right\rangle =\frac{1}{|G|}\sum_{g\in G}N_{w,G}(g)\overline{\chi}(g)=\frac{1}{|\nicefrac{G}{K}|}\sum_{gN\in\nicefrac{G}{K}}\tilde{N}_{w,G}(gN)\overline{\chi}(g)=\left\langle \tilde{N}_{w,G},\tilde{\chi}\right\rangle _{\nicefrac{G}{K}},
\]
where $\tilde{\chi}$ is the induced character on $\nicefrac{G}{K}$.
One easily sees that $\tilde{N}_{w,G}=|K|^{r-1}N_{w,\nicefrac{G}{K}}$,
and therefore 
\[
N_{w,G}^{\chi}=|K|^{r-1}N_{w,\nicefrac{G}{K}}^{\tilde{\chi}},
\]
and since $\nicefrac{G}{K}$ is abelian, $N_{w,G}^{\chi}\in\mathbb{Z}$.
\end{proof}

\begin{prop}
\label{prop:Cp_Cp-1} For every prime $p\geq5$ the group $G_{p}=C_{p}\rtimes C_{p-1}$
is semi-rational but does not satisfy the Amit-Vishne condition.\end{prop}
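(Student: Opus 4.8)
The plan is to realize $G_p$ concretely as the affine group $\mathbb{F}_p\rtimes\mathbb{F}_p^\times=\mathrm{AGL}(1,p)$, a Frobenius group whose kernel is $N=C_p$ (the translations) and whose complement $H=C_{p-1}$ acts on $N$ as the full automorphism group $Aut(N)\cong\mathbb{F}_p^\times$. With this picture the two halves of the statement separate cleanly: semi-rationality will come from Proposition \ref{prop:character-condition-SR}, while the failure of the Amit--Vishne condition will come from a single structural fact about $Aut(G_p)$.

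For semi-rationality, I would first list the conjugacy classes: the identity; the single class $N\setminus\{e\}$ of size $p-1$ (since $H$ acts transitively on the nontrivial translations); and one class of size $p$ for each of the $p-2$ nontrivial ``linear parts''. This gives exactly $p$ irreducible characters. Because $[G_p,G_p]=N$, there are $p-1$ linear characters inflated from $G_p/N\cong C_{p-1}$, leaving a single irreducible character $\chi$ of degree $p-1$, namely $\mathrm{Ind}_N^{G_p}\psi$ for any nontrivial $\psi\in Irr(N)$. Since the $H$-orbit of $\psi$ is all of $Irr(N)\setminus\{1\}$, the restriction $\chi|_N$ is the sum of all nontrivial characters of $N$, i.e. $\rho_N-1$; hence $\chi(e)=p-1$ and $\chi(n)=-1$ for $n\in N\setminus\{e\}$, while $\chi$ vanishes off $N$ because it is induced from the normal subgroup $N$. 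Thus the unique irreducible character of degree $\geq 2$ is integer-valued, and Proposition \ref{prop:character-condition-SR} shows $G_p$ is semi-rational.

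The crux of the argument, and the step I expect to carry the real content, is showing $G_p$ violates the Amit--Vishne condition. The key lemma is that the ``linear part'' homomorphism $\ell\colon G_p\to\mathbb{F}_p^\times$, sending $g$ to its conjugation action on $N$ (equivalently the projection $G_p\to G_p/N\cong\mathbb{F}_p^\times$, using $C_{G_p}(N)=N$), is \emph{invariant under every automorphism}. Indeed $N$ is characteristic, being the unique Sylow $p$-subgroup, so any $\phi\in Aut(G_p)$ restricts to $N$; and conjugation by $\phi(g)$ on $N$ equals $\phi|_N\circ\ell(g)\circ(\phi|_N)^{-1}$. As $Aut(N)\cong\mathbb{F}_p^\times$ is abelian, this conjugation is trivial, so $\ell(\phi(g))=\ell(g)$.

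With the lemma the counterexample is immediate: take a generator $t$ of $H$, of order $p-1$. Then $t$ and $t^{-1}$ generate the same cyclic subgroup $H$, yet $\ell(t)=\lambda$ and $\ell(t^{-1})=\lambda^{-1}$ with $\lambda\neq\lambda^{-1}$, since $\lambda$ has order $p-1>2$ and so $\lambda\neq\pm 1$. By the lemma $t$ and $t^{-1}$ lie in distinct $Aut(G_p)$-orbits, so the hypothesis of Corollary \ref{cor:AV-condition} fails. The assumption $p\geq 5$ enters exactly here: it guarantees $|H|=p-1\geq 4$, hence the existence of an element whose linear part differs from that of its inverse.
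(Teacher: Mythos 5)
Your proof is correct. The semi-rationality half is essentially the paper's argument: both identify the conjugacy classes $\{1\}$, $T-\{1\}$, $sT,\dots,s^{p-2}T$, the $p-1$ linear characters inflated from $\nicefrac{G_p}{T}\cong C_{p-1}$, and the single remaining irreducible character of degree $p-1$ with values $p-1,-1,0,\dots,0$, then invoke Proposition \ref{prop:character-condition-SR}; you obtain that last character by induction from $N$ where the paper uses column orthogonality, an immaterial difference. The Amit--Vishne half, however, takes a genuinely different route. The paper argues by direct computation in the presentation $\langle s,t\mid t^{p}=s^{p-1}=1,\ s^{-1}ts=t^{k}\rangle$: assuming $\alpha(s)=s^{-1}$ and $\alpha(t)=t^{n}$, it evaluates $\alpha(ts)$ two ways and derives $k^{p-3}\equiv1\pmod p$, contradicting primitivity of $k$ for $p\geq5$. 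You instead prove a structural lemma: the conjugation homomorphism $\ell\colon G_p\to Aut(N)\cong\mathbb{F}_p^{\times}$ satisfies $\ell(\phi(g))=\phi|_N\circ\ell(g)\circ(\phi|_N)^{-1}=\ell(g)$ for every $\phi\in Aut(G_p)$, because $N$ is characteristic and $Aut(N)$ is abelian; hence any element whose linear part has order $>2$ cannot be sent to its inverse. Your version is more conceptual and more general --- the same lemma applies verbatim to any group with a characteristic abelian normal subgroup whose automorphism group is abelian, and in particular would also dispatch the Amit--Vishne half of Proposition \ref{prop:Cp^2Cp} --- while the paper's computation is more self-contained and makes explicit exactly where $p\geq5$ is used (namely $k^{p-3}\not\equiv1$); in your argument the same hypothesis surfaces as $p-1>2$.
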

\begin{proof}
We start by proving that $G_{p}$ is semi-rational, and for that we
calculate the conjugacy classes: 

One can write $G_{p}=\left\langle s,t|\ t^{p}=s^{p-1}=1,\ s^{-1}ts=t^{k}\right\rangle $
for $k$ primitive in $\left(\nicefrac{\mathbb{Z}}{p\mathbb{Z}}\right)^{\times}$,
and get that 
\[
\left(s^{\alpha}t^{\beta}\right)^{-1}s^{x}\left(s^{\alpha}t^{\beta}\right)=s^{x}t^{\beta-k^{x}\beta},
\]
and since the function $m:\nicefrac{\mathbb{Z}}{(p-1)\mathbb{Z}}\rightarrow\nicefrac{\mathbb{Z}}{(p-1)\mathbb{Z}}$
defined by $m(\beta)=\beta(1-k^{x})$ is surjective, the set $\{s^{x},s^{x}t,\cdots,s^{x}t^{p-1}\}=s^{x}T$
is a conjugacy class, where $T=\left\langle t\right\rangle $. In
addition, $\left(s^{\alpha}t^{\beta}\right)^{-1}t^{x}\left(s^{\alpha}t^{\beta}\right)=t^{k^{\alpha}x}$,
and since $\alpha\mapsto k^{\alpha}x$ is also surjective, the conjugacy
classes are exactly 
\[
\{1\},\ T-\{1\},\ sT,\ \cdots,\ s^{p-2}T.
\]

Now, since $T\triangleleft G_{p}$ and $\nicefrac{G_{p}}{T}\overset{\sim}{=}C_{p-1}$,
there are $p-1$ one-dimensional characters of $G_{p}$ pulled from
$C_{p-1}$, and by column orthogonality one gets the full character
table of $G_{p}$:
\[
\begin{array}{ccccccc}
 & \{1\} & T-\{1\} & sT & s^{2}T & \cdots & s^{p-2}T\\
\chi_{1} & 1 & 1 & 1 & 1 & \cdots & 1\\
\vdots & \vdots & \vdots & \vdots & \vdots & \ddots & \vdots\\
\chi_{j} & 1 & 1 & e^{\frac{2\pi ij}{p-1}} & e^{\frac{4\pi ij}{p-1}} & \cdots & e^{\frac{2\pi ij(p-2)}{p-1}}\\
\vdots & \vdots & \vdots & \vdots & \vdots & \ddots & \vdots\\
\chi_{p-1} & 1 & 1 & e^{\frac{2\pi i(p-2)}{p-1}} & e^{\frac{4\pi i(p-2)}{p-1}} & \cdots & e^{\frac{2\pi i(p-2)^{2}}{p-1}}\\
\chi & p-1 & -1 & 0 & 0 & \cdots & 0
\end{array}
\]
By Proposition \ref{prop:character-condition-SR}, $G_{p}$ is semi-rational.

Next, we show that $G_{p}$ does not satisfy the Amit-Vishne condition:
Suppose that $\alpha$ is an automorphism of $G_{p}$ sending $s$
to $s^{-1}$. since $T$ is a normal $p$-Sylow subgroup, the image
of $t$ must be in $T$. Denote $\alpha(t)=t^{n}$, so 
\[
\alpha(ts)=\alpha(t)\alpha(s)=t^{n}s^{p-2}=s^{p-2}t^{k^{p-2}n}
\]
and on the other hand
\[
\alpha(ts)=\alpha(st^{k})=s^{p-2}t^{nk},
\]
which together gives $nk\equiv nk^{p-2}\left(\bmod p\right)$. Since
$n$ and $k$ are prime to $p$, we get $k^{p-3}\equiv1(\bmod p)$,
which is a contradiction, since $k$ is primitive, and $p\geq5$.
So there is no automorphism sending $s$ to $s^{-1}$, and $G_{p}$
does not satisfy the Amit-Vishne condition.
\end{proof}

For the next counter-example to Question \ref{ques:AV-condition}
we first prove the following condition:
\begin{prop}
\label{prop:comm-sqr-argument} If for $g,h\in G$ which generate
the same cyclic subgroup there exists a normal subgroup $N\triangleleft G$
such that $\nicefrac{G}{N}$ is semi-rational and such that $gN\subseteq O(g)$
and $hN\subseteq O(h)$ then $N_{w,G}(g)=N_{w,G}(h)$.\end{prop}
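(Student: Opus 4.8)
The plan is to reduce the desired equality $N_{w,G}(g)=N_{w,G}(h)$ to the corresponding equality inside the semi-rational quotient $\nicefrac{G}{N}$, using the two hypotheses $gN\subseteq O(g)$ and $hN\subseteq O(h)$ to control the passage between $G$ and $\nicefrac{G}{N}$. The essential tool is the coset-averaging operator already used in the proof of Proposition \ref{prop:character-condition-SR}: for the normal subgroup $N$ I set $\tilde{N}_{w,G}(gN)=\frac{1}{|N|}\sum_{n\in N}N_{w,G}(gn)$ and recall the identity $\tilde{N}_{w,G}=|N|^{r-1}N_{w,\nicefrac{G}{N}}$, which follows by counting the $|N|^{r}$ lifts of each solution tuple along the quotient map $G^{r}\rightarrow(\nicefrac{G}{N})^{r}$.

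First I would invoke the observation from the introduction that $N_{w,G}$ is constant on the orbits of $Aut(G)$. Since $gN\subseteq O(g)$, every element of the coset $gN$ lies in the same orbit as $g$, so $N_{w,G}(gn)=N_{w,G}(g)$ for all $n\in N$; the average over the coset therefore collapses and $\tilde{N}_{w,G}(gN)=N_{w,G}(g)$. Combining this with the averaging identity gives $N_{w,G}(g)=|N|^{r-1}N_{w,\nicefrac{G}{N}}(gN)$, and the same argument applied with $hN\subseteq O(h)$ yields $N_{w,G}(h)=|N|^{r-1}N_{w,\nicefrac{G}{N}}(hN)$. It thus suffices to prove $N_{w,\nicefrac{G}{N}}(gN)=N_{w,\nicefrac{G}{N}}(hN)$.

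Next I would note that the images $gN$ and $hN$ generate the same cyclic subgroup of $\nicefrac{G}{N}$: since $\left\langle g\right\rangle =\left\langle h\right\rangle$ in $G$, the quotient homomorphism carries both cyclic subgroups onto the same image, so $\left\langle gN\right\rangle =\left\langle hN\right\rangle$. Now I can call on the semi-rationality of $\nicefrac{G}{N}$: because $N_{w,\nicefrac{G}{N}}$ is a generalized character, Theorem \ref{thm:AV} applied to $\nicefrac{G}{N}$ forces it to take equal values on any two elements generating the same cyclic subgroup, whence $N_{w,\nicefrac{G}{N}}(gN)=N_{w,\nicefrac{G}{N}}(hN)$. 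Tracing back through the two displayed equalities then delivers $N_{w,G}(g)=N_{w,G}(h)$.

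The only real content beyond unwinding definitions is the interplay between the two hypotheses, and I expect the main subtlety to lie there rather than in any computation. The condition $gN\subseteq O(g)$ is precisely what is needed to make the coset average trivial, so that the value of $N_{w,G}$ at the single element $g$ is faithfully recorded by the quotient; without it the averaging would compute a genuine mean over $gN$ rather than the value at $g$ itself. Correspondingly, semi-rationality is assumed for $\nicefrac{G}{N}$ and not for $G$, so Theorem \ref{thm:AV} is available only downstairs, and the orbit conditions are exactly the ingredient that transports that information back up to $G$.
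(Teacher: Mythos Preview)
Your proof is correct and follows essentially the same route as the paper's own argument: the paper phrases the key counting identity via a commutative square (obtaining $\sum_{y\in gN}N_{w,G}(y)=|N|^{r}N_{w,\nicefrac{G}{N}}(gN)$), while you quote it in the averaged form $\tilde{N}_{w,G}=|N|^{r-1}N_{w,\nicefrac{G}{N}}$ from the proof of Proposition~\ref{prop:character-condition-SR}, and both then use $gN\subseteq O(g)$ to collapse the coset sum and Theorem~\ref{thm:AV} in the semi-rational quotient to finish. The logic and ingredients are identical; only the packaging differs.
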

\begin{rem}
\label{rem:comm-square-argument}Here $O(g)$ denotes the orbit of
$g$ under $Aut(G)$.\end{rem}
\begin{proof}
Denote $H=\nicefrac{G}{N}$ and $\pi:G\rightarrow H$ the quotient
map. Then the following diagram commutes:
\[
\xymatrix{G^{r}\ar[r]^{\pi^{r}}\ar[d]_{w} & H^{r}\ar[d]^{w}\\
G\ar[r]_{\pi} & H
}
\]

By going through the upper branch, every element $x\in H$ has $|N|^{r}N_{w,H}(x)$
preimages in $G^{r}$, and by going through the lower branch $x$
has $\sum_{y\in\pi^{-1}(x)}N_{w,G}(y)$ preimages in $G^{r}$. Putting
this together for $x=gN$ and $x=hN$ we get 
\[
|N|^{r}N_{w,H}(gN)=\sum_{y\in gN}N_{w,G}(y)
\]
 and 

\[
|N|^{r}N_{w,H}(hN)=\sum_{y\in hN}N_{w,G}(y)
\]
respectively. But since $g,h$ generate the same subgroup in $G$,
so do $gN$ and $hN$ in $H$, giving $N_{w,H}(gN)=N_{w,H}(hN)$ since
$H$ is semi-rational. Therefore
\[
\sum_{y\in gN}N_{w,G}(y)=\sum_{y\in hN}N_{w,G}(y)\ \ \ \ (*)
\]
and since $gN\subseteq O(g)$, for every $y\in gN$ one has $N_{w,G}(g)=N_{w,G}(y)$,
so $(*)$ becomes
\[
|N|\cdot N_{w,G}(g)=|N|\cdot N_{w,G}(h),
\]
yielding $N_{w,G}(g)=N_{w,G}(h)$.
\end{proof}
This gives our second semi-rationality condition:
\begin{cor}
\label{cor:Ori-condition} Suppose there exists $N\triangleleft G$
such that $gN\subseteq O(g)$ for every $g\notin N$ , and that for
every $g,h\in N$ generating the same subgroup one has $N_{w,G}(g)=N_{w,G}(h)$.
Then $G$ is semi-rational.
\end{cor}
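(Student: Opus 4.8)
The plan is to verify the Amit--Vishne criterion of Theorem \ref{thm:AV} head-on: I fix an arbitrary word $w\in F_{r}$ and show that $N_{w,G}(g)=N_{w,G}(h)$ whenever $g,h\in G$ generate the same cyclic subgroup. By Theorem \ref{thm:AV} this makes $N_{w,G}$ a generalized character, and since the argument is uniform in $w$ it yields semi-rationality of $G$. The first observation is that such a pair $\{g,h\}$ cannot straddle $N$: from $\langle g\rangle=\langle h\rangle$ we get $h\in\langle g\rangle$ and $g\in\langle h\rangle$, so $g\in N$ forces $\langle g\rangle\subseteq N$ and hence $h\in N$, and symmetrically. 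It therefore suffices to treat the two cases $g,h\in N$ and $g,h\notin N$ separately.

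The case $g,h\in N$ is immediate, being precisely the second hypothesis of the corollary, which grants $N_{w,G}(g)=N_{w,G}(h)$. For the case $g,h\notin N$ I would invoke Proposition \ref{prop:comm-sqr-argument} with the same normal subgroup $N$. Its two orbit requirements, $gN\subseteq O(g)$ and $hN\subseteq O(h)$, are supplied by the first hypothesis of the corollary, which applies exactly because $g,h\notin N$; the proposition then returns $N_{w,G}(g)=N_{w,G}(h)$. Combining the two cases completes the verification of the Amit--Vishne criterion and hence the proof.

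The one point demanding care --- and the step I expect to be the main obstacle --- is that Proposition \ref{prop:comm-sqr-argument} additionally requires $\nicefrac{G}{N}$ to be semi-rational, a hypothesis the corollary inherits from the proposition but does not spell out. That this assumption is genuinely indispensable can be seen from the degenerate choice $N=\{1\}$: then $gN=\{g\}\subseteq O(g)$ holds trivially for every $g$ and the second hypothesis is vacuous, so in the absence of the quotient condition the corollary would assert that every finite group is semi-rational, contradicting the non-semi-rational families built later in the paper. I would accordingly carry the standing assumption that $\nicefrac{G}{N}$ is semi-rational throughout the proof. In the intended applications this costs nothing: for instance, for groups of order $pq$ one takes $N$ to be the normal Sylow subgroup, whence $\nicefrac{G}{N}$ is cyclic and therefore abelian, and abelian groups are semi-rational automatically.
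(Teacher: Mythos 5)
Your proof is correct, and it follows exactly the route the paper intends: the paper states the corollary without an explicit proof, presenting it as an immediate consequence of Proposition \ref{prop:comm-sqr-argument}, and your case split (both elements in $N$, handled by the second hypothesis; both outside $N$, handled by the proposition, after noting that a pair generating the same cyclic subgroup cannot straddle $N$) together with the appeal to Theorem \ref{thm:AV} is precisely that derivation. Your flagged concern is not a defect of your argument but a genuine omission in the paper's statement: the hypothesis that $\nicefrac{G}{N}$ is semi-rational is indeed indispensable, as your $N=\{1\}$ reductio shows (the unqualified statement would make every finite group semi-rational, contradicting Proposition \ref{prop:PSL_2(p)} and the examples in Section \ref{sec:Further-Questions}), and it is silently satisfied in both applications in the paper --- Propositions \ref{prop:Cp^2Cp} and \ref{prop:CpCq} --- where $\nicefrac{G}{N}$ is abelian. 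Carrying that assumption explicitly, as you do, is the right call.
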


\begin{prop}
\label{prop:Cp^2Cp}For every prime $p\geq3$ the group $C_{p^{2}}\rtimes C_{p}=\left\langle s,t|\ t^{p^{2}}=s^{p}=1,\ s^{-1}ts=t^{p+1}\right\rangle $
is semi-rational but does not satisfy the Amit-Vishne condition.\end{prop}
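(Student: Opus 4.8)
The plan is to establish semi-rationality through Corollary~\ref{cor:Ori-condition} and to violate the Amit-Vishne condition by pinning down the action of $\mathrm{Aut}(G)$ on $s$. Write $T=\langle t\rangle\cong C_{p^{2}}$, a normal maximal subgroup with $G/T\cong C_{p}$. Since $s^{-1}ts=t^{p+1}$ gives $[s,t]=t^{-p}$, the group has order $p^{3}$, nilpotency class $2$, and $Z(G)=[G,G]=\langle t^{p}\rangle\cong C_{p}$. I would first record the characteristic subgroup $\Omega:=\langle s,t^{p}\rangle$, which I claim is exactly the set of elements of order dividing $p$: using the class-$2$ identity one computes $(s^{a}t^{b})^{p}=t^{bp}$ for odd $p$, so $g^{p}=1$ forces $p\mid b$, giving $\Omega=\{s^{a}t^{pk}\}\cong C_{p}\times C_{p}$.

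For semi-rationality I take $N=Z(G)=\langle t^{p}\rangle$ in Corollary~\ref{cor:Ori-condition}. The quotient $G/N\cong C_{p}\times C_{p}$ is abelian, hence semi-rational by Proposition~\ref{prop:character-condition-SR} (vacuously, having no character of degree $\geq2$). It remains to exhibit enough automorphisms. I would use two families: the power maps $\phi_{a}\colon t\mapsto t^{a},\,s\mapsto s$ for $\gcd(a,p)=1$, and the central shifts $\psi_{k}\colon s\mapsto st^{pk},\,t\mapsto t$; both preserve the defining relation (for $\psi_{k}$ because $t^{pk}$ is central), so they are automorphisms. For the coset condition $gN\subseteq O(g)$ with $g\notin N$ I would split into two cases. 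If $g=t^{b}$ with $p\nmid b$, then $\phi_{a}(t^{b})=t^{ab}$ sweeps out $t^{b}\langle t^{p}\rangle$ as $a$ runs over the units $\equiv1+pkb^{-1}$. If $g=s^{a}t^{b}$ with $a\neq0$, then $\psi_{k}(g)=s^{a}t^{b}\cdot t^{pka}$, and since $a$ is a unit mod $p$ this covers all of $gN$ as $k$ varies. Finally the generators $t^{pk}$ of $N$ all lie in one $\mathrm{Aut}(G)$-orbit (again via $\phi_{a}$), so $N_{w,G}$ is constant on them; this is the second hypothesis of the corollary, and semi-rationality follows.

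To show the Amit-Vishne condition fails I would take $g=s$ and $h=s^{2}$, which generate the same subgroup $\langle s\rangle\cong C_{p}$ (note $p\geq3$ forces $2\not\equiv1\pmod p$), and prove they are not $\mathrm{Aut}(G)$-conjugate. Any $\alpha\in\mathrm{Aut}(G)$ preserves orders, so $\alpha(s)\in\Omega$, say $\alpha(s)=s^{a}t^{pe}$, while $\alpha(t)=s^{c}t^{d}$ with $p\nmid d$. Applying $\alpha$ to $[s,t]=t^{-p}$ and using bilinearity of the commutator into the central $[G,G]$ gives $t^{-pad}=t^{-pd}$, hence $a\equiv1\pmod p$ and $\alpha(s)=st^{pe}\in sZ(G)$. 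Thus $O(s)\subseteq s\langle t^{p}\rangle$, which does not contain $s^{2}$, so $s$ and $s^{2}$ lie in distinct orbits and the Amit-Vishne condition is violated.

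The main obstacle is this last computation forcing $a\equiv1$: knowing that $\Omega$ (equivalently its image in $G/Z(G)$) is characteristic only yields $\alpha(s)\in\Omega$, allowing $a$ to be any unit, and upgrading this to $a=1$, i.e.\ $\alpha(s)\in sZ(G)$, genuinely requires the defining relation through the commutator identity $[s,t]=t^{-p}$. The only other point needing care is the uniform verification of the coset condition $gN\subseteq O(g)$ over all non-central $g$, for which the identity $\psi_{k}(s^{a}t^{b})=s^{a}t^{b}\cdot t^{pka}$ together with $a$ being a unit is the crucial observation; the odd-prime hypothesis $p\geq3$ enters both here (through $(s^{a}t^{b})^{p}=t^{bp}$) and in guaranteeing $s^{2}\neq s$.
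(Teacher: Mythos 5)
Your proof is correct and follows the same overall strategy as the paper: semi-rationality via Corollary~\ref{cor:Ori-condition} with $N=\langle t^{p}\rangle=Z(G)$, the power automorphisms $t\mapsto t^{a},\ s\mapsto s$ for the elements of $N$, and a failure of the Amit-Vishne condition witnessed on the generators of $\langle s\rangle$. Two local differences are worth recording. First, for the coset condition $gN\subseteq O(g)$ you use the outer automorphisms $\psi_{k}\colon s\mapsto st^{pk}$, whereas the paper observes that the conjugacy class of every non-central $g$ is already the full coset $gZ(G)$ (since $[s^{a}t^{b},s^{c}t^{d}]=t^{-p(ad-bc)}$ and $(c,d)\mapsto ad-bc$ is onto $\mathbb{F}_{p}$); both work, and the inner-automorphism route is marginally shorter. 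Second, and more substantively, your treatment of the Amit-Vishne part is tighter than the paper's: the paper deduces $\alpha(t)=t^{y}$ from the requirement that $\alpha(t)$ have order $p^{2}$, but this is not forced --- $st$ also has order $p^{2}$ since $(s^{a}t^{b})^{p}=t^{bp}$ --- so $\langle t\rangle$ need not be preserved. Your argument, allowing $\alpha(t)=s^{c}t^{d}$ with $p\nmid d$ and extracting $a\equiv1\pmod p$ from bilinearity of the commutator into the central $[G,G]$, closes that gap and in fact proves the stronger statement $O(s)\subseteq sZ(G)$, from which the failure for the pair $(s,s^{2})$ (or the paper's pair $(s,s^{-1})$) is immediate.
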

\begin{proof}
The same kind of calculation done in Proposition \ref{prop:Cp_Cp-1}
shows that $gT=[g]\subseteq O(g)$ for every $g\notin T$, where $T=\left\langle t^{p}\right\rangle $.

For $g\in T$ we show that for every $y$ prime to the order of $g$
one can construct an automorphism of $G=C_{p^{2}}\rtimes C_{p}$ sending
$g$ to $g^{y}$, therefore giving $N_{w,G}(g)=N_{w,G}(g^{y})$. It
is enough to show this for $g=t^{p}$. Consider $\eta$ defined by
$t\mapsto t^{y}$ and $s\mapsto s$. Since $t^{y}s=st^{(p+1)y}=s\left(t^{y}\right)^{p+1}$,
$\eta$ extends uniquely to a homomorphism of $G$, and since $t^{y},s\in im(\eta)$
with $y$ prime to the order of $t$, $\eta$ is surjective and hence
an automorphism. Additionally, $\eta(t^{p})=\left(t^{y}\right)^{p}=\left(t^{p}\right)^{y}$,
as we wanted.

In conclusion, the conditions in Corollary \ref{cor:Ori-condition}
are satisfied, and therefore $G$ is semi-rational.

We show now that $G$ does not satisfy the Amit-Vishne condition,
as there is no automorphism sending $s$ to $s^{-1}$: Suppose $\alpha$
is such an automorphism. Since $\alpha(t)$ needs to be of order $p^{2}$,
one gets $\alpha(t)=t^{y}$ for some $1\leq y<p^{2}$ prime to $p$.
So
\[
\alpha(ts)=t^{y}s^{p-1}=s^{p-1}t^{(p+1)^{p-1}y}
\]
and 
\[
\alpha(ts)=\alpha(st^{p+1})=s^{p-1}t^{(p+1)y}
\]
 gives together $(p+1)^{p-1}y\equiv(p+1)y\ (\bmod p^{2})$, and since
$y,\,p+1$ are prime to $p$ we get that $(p+1)^{p-2}\equiv1\ (\bmod p^{2})$.
But 
\[
(p+1)^{p-2}\equiv1+p(p-2)\equiv1-2p\ (\bmod p^{2})
\]
which is a contradiction, since for $p\geq3$, $1-2p\not\equiv1\ (\bmod p^{2})$,
and therefore $G$ does not satisfy the Amit-Vishne condition.
\end{proof}

We use Corollary \ref{cor:Ori-condition} to provide another semi-rationality
result:
\begin{prop}
\label{prop:CpCq}The group $G=C_{p}\rtimes C_{q}$ is semi-rational
for $p,q$ primes and $p\equiv1(\bmod q)$.\end{prop}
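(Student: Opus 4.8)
The plan is to apply Corollary \ref{cor:Ori-condition} with $N$ taken to be the normal Sylow subgroup $P = C_p$, exactly as was done for the groups $C_{p^2} \rtimes C_p$. To do this I need two things: first, that $gN \subseteq O(g)$ for every $g \notin N$, and second, that elements of $N$ generating the same subgroup give equal values of $N_{w,G}$.

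First I would set up the group explicitly. Writing $G = \langle s, t \mid t^p = s^q = 1,\ s^{-1}ts = t^k \rangle$ where $k$ has multiplicative order $q$ in $(\mathbb{Z}/p\mathbb{Z})^\times$ (such $k$ exists precisely because $q \mid p-1$), I would compute conjugacy classes by the same kind of calculation as in Proposition \ref{prop:Cp_Cp-1}. The conjugation action of $s$ on $t$ is by the automorphism $t \mapsto t^k$ of order $q$, and conjugation by $t$ shifts the $T$-component; together these should show that for $g \notin T$ (where $T = \langle t \rangle$) the coset $gT$ is contained in a single conjugacy class, hence certainly in $O(g)$, giving the first hypothesis of Corollary \ref{cor:Ori-condition} with $N = T$.

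Next I must handle elements of $N = T = \langle t \rangle \cong C_p$. For $g = t^i \neq 1$ and any $y$ prime to $p$, I would exhibit an automorphism of $G$ sending $t \mapsto t^y$, $s \mapsto s$; one checks this respects the relation $s^{-1}ts = t^k$ because $\eta(t^k) = t^{ky} = (t^y)^k = \eta(t)^k$, and it is surjective since $t^y$ and $s$ generate $G$, hence an automorphism. This sends any generator of $T$ to any other, so all nonidentity elements of $T$ lie in a single $Aut(G)$-orbit; in particular any $g, h \in T$ generating the same cyclic subgroup satisfy $N_{w,G}(g) = N_{w,G}(h)$ because $N_{w,G}$ is constant on $Aut(G)$-orbits. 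The identity is trivial. This verifies the second hypothesis, and Corollary \ref{cor:Ori-condition} then yields semi-rationality.

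I expect the only genuine obstacle to be the conjugacy-class computation establishing $gT \subseteq O(g)$ for $g \notin T$, since this requires checking that the relevant shift maps on $\mathbb{Z}/p\mathbb{Z}$ are surjective when $g$ has a nontrivial $s$-component; the argument is the same surjectivity-of-$\beta \mapsto \beta(1-k^x)$ phenomenon used in Proposition \ref{prop:Cp_Cp-1}, and here it works because $k^x \not\equiv 1 \pmod p$ whenever the $s$-exponent $x$ is nonzero modulo $q$. The automorphism construction and the orbit argument for $T$ are essentially identical to those already carried out in Proposition \ref{prop:Cp^2Cp}, so those steps should be routine.
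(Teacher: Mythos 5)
Your proposal is correct and follows essentially the same route as the paper: the paper also applies Corollary \ref{cor:Ori-condition} with $N=T=\langle t\rangle$, citing the same conjugacy-class computation for $g\notin T$ and the same automorphism $t\mapsto t^{y}$, $s\mapsto s$ for elements of $T$. Your version is in fact slightly more explicit than the paper's about why the relation $s^{-1}ts=t^{k}$ is preserved and why $k^{x}\not\equiv 1\pmod p$ for $x\not\equiv 0\pmod q$, but the argument is the same.
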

\begin{proof}
We can write $G=\left\langle s,t|\ t^{p}=s^{q}=1,\ s^{-1}ts=t^{k}\right\rangle $
for some $k$ of order $q$ modulo $p$. By a straight forward computation,
the conjugacy class of $g$ is $gT=g\left\langle t\right\rangle $
for every $g\notin T$. 

In addition, we show that for every $g,h\in G$ generating the same
subgroup there is an automorphism sending $g$ to $h$. It is enough
to show that $t$ can be mapped to $t^{y}$ by an automorphism for
every $y\neq0$. This is true since the conditions $t\mapsto t^{y}$
and $s\mapsto s$ extend uniquely to an automorphism. This automorphism
surely sends $t$ to $t^{y}$. Therefore $N_{w,G}(g)=N_{w,G}(h)$
for every $g,h\in T$ generating the same subgroup.

The condition for Corollary \ref{cor:Ori-condition} hold, and therefore
$G$ is semi-rational.
\end{proof}

\subsection{The Finite Simple Group Case\label{sub:The-Finite-Simple}}

Even though the Amit-Vishne condition is not a necessary condition
for semi-rationality in the general case, for finite simple groups
Question \ref{ques:AV-condition} has a positive answer.

To show that we use a result of Lubotzky \cite{lubotzky2014images}:
\begin{thm}[\cite{lubotzky2014images}, Theorem 1]
\label{thm:Lubotzky}If A is a subset of a finite simple group $G$,
then there exists a word $w\in F_{k}$ for some $k$ such that $im(w)=A$
if and only if $1\in A$ and $\alpha(A)=A$ for every $\alpha\in Aut(G)$.\end{thm}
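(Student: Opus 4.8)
The statement splits into an easy necessity direction and a hard sufficiency direction; the plan is to dispatch necessity immediately and to organize sufficiency around a reduction to a single combination operation on word images.

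\textbf{Necessity.} First I would observe that any word image automatically satisfies the two conditions. Substituting the identity gives $w(1,\dots,1)=1$, so $1\in im(w)$. For any $\alpha\in Aut(G)$, since $w$ is built only from the group operations one has the identity of maps $\alpha\bigl(w(g_{1},\dots,g_{k})\bigr)=w\bigl(\alpha(g_{1}),\dots,\alpha(g_{k})\bigr)$; running $(g_{1},\dots,g_{k})$ over $G^{k}$ yields $\alpha(im(w))=im(w)$. Hence if $A=im(w)$ then $1\in A$ and $\alpha(A)=A$ for all $\alpha$. Note this direction uses nothing about $G$ being simple.

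\textbf{Reduction for sufficiency.} Here the plan is to exploit a few closure properties of word images. If $w_{1},w_{2}$ use disjoint variable sets then $im(w_{1}w_{2})=im(w_{1})\cdot im(w_{2})$ (setwise product), and $im(w^{n})=\{g^{n}:g\in im(w)\}$, so in particular $im(x^{n})$ is the set of $n$-th powers, an $Aut(G)$-invariant set containing $1$. Now let $A$ be $Aut(G)$-invariant with $1\in A$. Then $A$ is a union of $Aut(G)$-orbits, one of which is $\{1\}$, and dually
\[
A=\bigcap_{O\cap A=\emptyset}(G\setminus O),
\]
the intersection being over the orbits $O$ disjoint from $A$; each such $G\setminus O$ is $Aut(G)$-invariant and contains $1$ (as $O\neq\{1\}$). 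Thus it would suffice to prove two things: (i) each \emph{basic} set $G\setminus O$ is a word image, and (ii) the class of word images is closed under finite intersection. Granting (i) and (ii), the displayed identity realizes every admissible $A$ as a finite intersection of word images, completing the proof.

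\textbf{The main obstacle.} Both (i) and (ii) are where simplicity of $G$ is essential and where I expect the genuine difficulty to lie. For a supply of surjective and almost-surjective words one must invoke post-classification machinery: Ore's conjecture (now a theorem of Liebeck, O'Brien, Shalev and Tiep) makes the commutator word surjective on $G$, and the probabilistic generation and spread results of Guralnick--Kantor and Liebeck--Shalev, together with Larsen--Shalev style bounds on word widths, allow one to produce words whose images are prescribed up to a controlled error. The crux is the intersection step (ii): given $im(u)=A_{1}$ and $im(v)=A_{2}$, one must engineer a single word whose image is \emph{exactly} $A_{1}\cap A_{2}$. I expect this to be the hard part, since the product operation only enlarges images while the power operation shrinks them in a very restricted way, so cutting an image down to a precise intersection demands a new construction that genuinely uses simplicity---combining $u$, $v$, high powers, and commutators so that an element survives in the image precisely when it lies in both $A_{1}$ and $A_{2}$. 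Verifying that the resulting composite word has image neither too large nor too small is the delicate point, and is exactly the content that Lubotzky's argument supplies.
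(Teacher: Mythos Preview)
The paper does not prove this theorem: it is quoted verbatim as Theorem~1 of \cite{lubotzky2014images} and used as a black box in the proof of Proposition~\ref{prop:FSG-AV-condition}. So there is no argument in the paper against which to compare your proposal.

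That said, your proposal is not a proof. The necessity direction is correct and complete. For sufficiency you set up a reasonable reduction---write the admissible $A$ as a finite intersection of sets $G\setminus O$, then ask for (i) each $G\setminus O$ to be a word image and (ii) word images to be closed under intersection---but you then explicitly leave both (i) and (ii) unproved, saying that ``this is exactly the content that Lubotzky's argument supplies''. So the hard direction is entirely deferred; nothing in the proposal actually establishes it.

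There is also a methodological mismatch worth noting. You suggest attacking (i) and (ii) with post-classification machinery (Ore's theorem, Larsen--Shalev width bounds, Guralnick--Kantor spread). But the paper's Remark~\ref{rem:Lubotzky} immediately after the theorem states that Lubotzky's result, in the form quoted (with $k$ unbounded), does \emph{not} rely on the classification of finite simple groups; only the sharpening to $w\in F_{2}$ does. So the intended proof is more elementary than the tools you are reaching for, and even if your outline could be completed along those lines it would yield a statement contingent on classification, which is strictly weaker than what is being cited here.
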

\begin{rem}
\label{rem:Lubotzky}One can even get $w\in F_{2}$, but unlike Theorem
\ref{thm:Lubotzky}, this requires the classification theorem of finite
simple groups.\end{rem}
\begin{prop}
\label{prop:FSG-AV-condition}For finite simple groups the Amit-Vishne
condition is sufficient and necessary. Namely, $G$ is semi-rational
if and only if for every $g,h\in G$ generating the same subgroup,
$\alpha(g)=h$ for some $\alpha\in Aut(G)$.\end{prop}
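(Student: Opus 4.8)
The plan is to prove the two directions separately, using the Amit–Vishne characterization (Theorem~\ref{thm:AV}) as the bridge between semi-rationality and the combinatorial condition on word maps. The sufficiency direction is already Corollary~\ref{cor:AV-condition} and requires no new argument, so the entire content lies in proving that for finite simple groups the Amit–Vishne condition is \emph{necessary}.

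For the necessity direction, I would argue by contraposition: assume $G$ is a finite simple group that fails the Amit–Vishne condition, and produce a word $w$ witnessing that $G$ is not semi-rational. Failure of the condition means there exist $g,h \in G$ generating the same cyclic subgroup but lying in different $\mathrm{Aut}(G)$-orbits. The key idea is to use Lubotzky's Theorem~\ref{thm:Lubotzky} to engineer a word map whose image is a carefully chosen $\mathrm{Aut}(G)$-invariant set $A$ that distinguishes $g$ from $h$ at the level of the distribution $N_{w,G}$. The natural candidate is to take $A = O(g) \cup \{1\}$, the $\mathrm{Aut}(G)$-orbit of $g$ together with the identity (the identity must be adjoined since every word map hits $1$, and Lubotzky's criterion requires $1 \in A$). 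Since the orbit $O(g)$ is automatically $\mathrm{Aut}(G)$-invariant and $1$ is fixed by every automorphism, the set $A$ satisfies the hypotheses of Theorem~\ref{thm:Lubotzky}, so there is a word $w$ with $\mathrm{im}(w) = A$.

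The payoff comes from evaluating $N_{w,G}$ at $g$ and $h$. Since $\mathrm{im}(w) = A \ni g$, we have $N_{w,G}(g) > 0$. On the other hand, because $g$ and $h$ lie in different $\mathrm{Aut}(G)$-orbits, $h \notin O(g)$; and one needs to ensure $h \neq 1$ as well, which holds because $g$ and $h$ generate the \emph{same} nontrivial cyclic subgroup (if the subgroup were trivial then $g = h = 1$, contradicting that they are in different orbits). Hence $h \notin A = \mathrm{im}(w)$, giving $N_{w,G}(h) = 0$. Thus $N_{w,G}(g) > 0 = N_{w,G}(h)$ even though $g$ and $h$ generate the same cyclic subgroup, so by Theorem~\ref{thm:AV} the distribution $N_{w,G}$ is not a generalized character, and $G$ is not semi-rational.

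I expect the main conceptual step to be recognizing that Lubotzky's image theorem is exactly the right tool: it converts the abstract $\mathrm{Aut}(G)$-invariance of an orbit into a concrete word realizing that orbit as an image, which is what lets us separate $g$ from $h$ in the support of $N_{w,G}$. The only genuine subtlety to verify carefully is the bookkeeping around the identity element — making sure that adjoining $1$ to $O(g)$ does not accidentally pull $h$ into the set, which is handled by the observation above that $h \neq 1$. Everything else is a direct combination of the cited results, and the reverse implication is immediate from Corollary~\ref{cor:AV-condition}.
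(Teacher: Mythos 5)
Your proposal is correct and follows essentially the same route as the paper: contraposition via Lubotzky's Theorem~\ref{thm:Lubotzky} to realize an $\mathrm{Aut}(G)$-invariant set containing $1$ as a word image separating $g$ from $h$, then concluding with Theorem~\ref{thm:AV}. The only (immaterial) difference is your choice of $A=O(g)\cup\{1\}$ where the paper uses the complement $G-O(h)$; both satisfy Lubotzky's hypotheses and both separate the two elements.
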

\begin{proof}
Suppose that $G$ does not satisfy the Amit-Vishne condition. Namely,
there are $g,g^{\prime}\in G$ generating the same subgroup such that
$\alpha(g)\neq g^{\prime}$ for every $\alpha\in Aut(G)$. Denote
by $O(g^{\prime})$ the orbit of $g^{\prime}$ under the action of
$Aut(G)$, and $A=G-O(g^{\prime})$. Then $1\in A$ since $1\notin O(g^{\prime})$,
and $\alpha(A)=A$ for every $\alpha\in Aut(G)$.

So by Theorem \ref{thm:Lubotzky} there exists $w\in F_{k}$ such
that $im(w)=A$. But $g\in A$ and $g^{\prime}\notin A$, and therefore
\[
N_{w,G}(g)>0=N_{w,G}(g^{\prime}).
\]
By Theorem \ref{thm:AV}, $N_{w,G}$ is not a generalized character,
so $G$ is not semi-rational.
\end{proof}

We use this result to prove that $PSL_{2}(p)$ is not semi-rational
for $p\geq11$:
\begin{prop}
\label{prop:PSL_2(p)}For every prime $p\geq11$ the group $PSL_{2}(p)$
is not semi-rational.\end{prop}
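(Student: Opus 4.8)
The plan is to apply Proposition \ref{prop:FSG-AV-condition}: since $PSL_2(p)$ is simple for $p\geq5$, it is semi-rational if and only if it satisfies the Amit-Vishne condition, so it suffices to produce two elements generating the same cyclic subgroup that lie in different orbits of $\mathrm{Aut}(PSL_2(p))$. I would take these to be $g$ and $g^k$ for a generator $g$ of a suitable cyclic subgroup and an exponent $k$ coprime to the order of $g$.

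First I would record that for prime $p\geq5$ one has $\mathrm{Aut}(PSL_2(p))=PGL_2(p)$ acting by conjugation, so that $\mathrm{Aut}$-orbits coincide with $PGL_2(p)$-conjugacy classes. Next I would recall the two cyclic maximal tori of $PSL_2(p)$: the image of the diagonal (split) torus, cyclic of order $m_-=\frac{p-1}{2}$, and the non-split torus, cyclic of order $m_+=\frac{p+1}{2}$. The core computation is a conjugacy criterion: if $g$ is a generator of one of these tori, represented by $M\in SL_2(p)$ with eigenvalues $\lambda,\lambda^{-1}$, then the $PGL_2(p)$-class of this regular semisimple element is determined by $\operatorname{tr}(M)^2$ --- equivalently by the unordered pair $\{\lambda^2,\lambda^{-2}\}$ of eigenvalue ratios --- because semisimple classes in $GL_2(p)$ are given by the characteristic polynomial, and passing to $PSL_2/PGL_2$ only introduces the sign ambiguity $M\leftrightarrow -M$. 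Writing $n$ for the order of $g$, a short calculation using $\operatorname{tr}(M^k)=\lambda^k+\lambda^{-k}$ then shows that $g^k$ is conjugate to $g$ in $PGL_2(p)$ if and only if $k\equiv\pm1\pmod n$.

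Granting this criterion, a non-rational element exists in a torus of order $n$ exactly when there is some $k$ coprime to $n$ with $k\not\equiv\pm1\pmod n$, i.e. when $(\mathbb{Z}/n\mathbb{Z})^\times\neq\{\pm1\}$, i.e. when $\phi(n)>2$; since $\phi$ is even for $n\geq3$, this means $n\geq5$ and $n\neq6$. To finish I would observe that for every prime $p\geq11$ at least one of the two torus orders $m_-=\frac{p-1}{2}$ and $m_+=\frac{p+1}{2}$ satisfies this: they are consecutive integers, both are $\geq5$, and at most one of two consecutive integers equals $6$, so whichever is $\neq6$ has $\phi\geq4$. Concretely $m_-$ works for all $p\geq11$ except $p=13$, where $m_-=6$ and one instead uses the non-split torus of order $m_+=7$. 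Choosing $g$ a generator of that torus and $k$ coprime to $n$ with $k\not\equiv\pm1\pmod n$ then yields $g$ and $g^k$ generating the same cyclic subgroup but lying in distinct $PGL_2(p)$-classes, so the Amit-Vishne condition fails and $PSL_2(p)$ is not semi-rational.

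The main obstacle is the conjugacy criterion of the second step: one must pin down $\mathrm{Aut}(PSL_2(p))$ exactly, so that no unexpected automorphism identifies $g$ with $g^k$, and one must establish that within $PGL_2(p)$ the conjugacy of a regular semisimple element with its powers is controlled by the eigenvalue ratio up to inversion. Both are standard facts about $PSL_2$ over a prime field, but they are where the real content lies; the remaining number-theoretic bookkeeping, including the isolated exception at $p=13$, is routine once the criterion is in place.
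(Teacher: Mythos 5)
Your proposal is correct, and its skeleton coincides with the paper's: reduce via Proposition \ref{prop:FSG-AV-condition} to showing the Amit--Vishne condition fails, use that $\mathrm{Aut}(PSL_2(p))=PGL_2(p)$ acts by conjugation, and exhibit a regular semisimple $g$ together with a coprime power $g^s$ lying in a different $PGL_2(p)$-class. The differences are in the two places where the paper is less uniform. First, the paper establishes non-conjugacy geometrically: a split-torus element fixes exactly $\{0,\infty\}$ in $\mathbb{P}^1(\mathbb{F}_p)$, so any conjugator is monomial, and a two-line computation shows it can only produce $g$ or $g^{-1}$. Your criterion --- the $PGL_2(p)$-class of the image of $M\in SL_2(p)$ is determined by $\operatorname{tr}(M)^2$, equivalently by $\{\lambda^2,\lambda^{-2}\}$, so $g^k\sim g$ iff $k\equiv\pm1\pmod{n}$ --- proves the same thing by linear algebra, and has the advantage of applying verbatim to the non-split torus, whose elements have no fixed points in $\mathbb{P}^1(\mathbb{F}_p)$ and so are not reachable by the paper's argument. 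Second, and consequently, you dispose of the small primes without a computer: the paper handles $p=11$ and $p=13$ by machine verification that $N_{w,G}$ fails to be a generalized character for an explicit word, which is forced at $p=13$ because $\frac{p-1}{2}=6$ has $\phi(6)=2$ and the split torus gives nothing; you instead pass to the non-split torus of order $7$, and your observation that at most one of the consecutive integers $\frac{p-1}{2},\frac{p+1}{2}$ can equal $6$ makes the whole range $p\geq11$ uniform. The cost is that you must carry the slightly heavier conjugacy criterion (rational canonical form for regular semisimple classes and the $\pm1$ scalar ambiguity when descending from $GL_2$ to $PGL_2$), which you correctly flag as the real content; the paper's fixed-point computation is more elementary but covers fewer cases. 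Both routes rely on the same external inputs (Lubotzky's theorem via Proposition \ref{prop:FSG-AV-condition}, and the determination of $\mathrm{Aut}(PSL_2(p))$).
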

\begin{proof}
A direct computation shoes that $PSL_{2}(11)$ and $PSL_{2}(13)$
are not semi-rational, since for $w=xyx^{2}y^{3}$ the function $N_{w,PSL_{2}(11)}$
and $N_{2,PSL_{2}(13)}$ are not generalized characters of $PSL_{2}(11)$
and $PSL_{2}(13)$ respectively.

For $p\geq17$ we prove that $PSL_{2}(p)$ does not satisfy the Amit-Vishne
condition. Let $g$ be of the form $g=\left(\begin{array}{cc}
x\\
 & x^{-1}
\end{array}\right)\in PSL_{2}(p)$ for $x\in\mathbb{F}_{p}$ such that the order of $g$ is $\frac{p-1}{2}$.
This occurs for $x$ of order $\frac{p-1}{2}$ for $p\equiv3(\bmod4)$
and for $x$ of order $p-1$ for $p\equiv1(\bmod4)$. Choose $s$
prime to $\frac{p-1}{2}$ such that $g^{s}=\left(\begin{array}{cc}
x^{s}\\
 & x^{-s}
\end{array}\right)\notin\{g,g^{-1}\}$.

It is known that every automorphism of $PSL_{2}(p)$ is induced as
a conjugation by a matrix from $PGL_{2}(p)$ (\cite{wilson2009finite},
Section 3.3.4). Suppose that there exists an automorphism sending
$g$ to $g^{s}$. So we can write $g^{s}=A^{-1}gA$ for some $A\in PGL_{2}(p)$.
But looking at the action of $PGL_{2}(p)$ on $\mathbb{F}_{p}\cup\{\infty\}$,
the matrices $g,g^{s}$ fix only $0$ and $\infty$, and therefore
$A$ either fixes $0$ and $\infty$ or switches between the two.

If $A$ fixes $0$ and $\infty$, then $A$ is of the form $A=\left(\begin{array}{cc}
a\\
 & b
\end{array}\right)$, and if $A$ switches between $0$ and $\infty$, then $A$ is of
the form $A=\left(\begin{array}{cc}
 & a\\
b
\end{array}\right)$. But
\[
\left(\begin{array}{cc}
a\\
 & b
\end{array}\right)^{-1}\left(\begin{array}{cc}
x\\
 & x^{-1}
\end{array}\right)\left(\begin{array}{cc}
a\\
 & b
\end{array}\right)=\left(\begin{array}{cc}
x\\
 & x^{-1}
\end{array}\right)\neq g^{s},
\]
and 
\[
\left(\begin{array}{cc}
 & a\\
b
\end{array}\right)^{-1}\left(\begin{array}{cc}
x\\
 & x^{-1}
\end{array}\right)\left(\begin{array}{cc}
 & a\\
b
\end{array}\right)=\left(\begin{array}{cc}
x^{-1}\\
 & x
\end{array}\right)\neq g^{s}.
\]

Whence $PSL_{2}(p)$ does not satisfy the Amit-Vishne condition, and
being simple they are not semi-rational by Proposition \ref{prop:FSG-AV-condition}.\end{proof}
\begin{rem}
After the completion of this paper, it was pointed to us by R. Guralnick
that our Proposition \ref{prop:PSL_2(p)} overlaps with results in
\cite{guralnick2015rational}.
\end{rem}

\section{Further Questions\label{sec:Further-Questions}}
\begin{defn}
\label{def:abs-center}For a group $G$, denote $Z^{*}(G)=\{g\in G:\ \alpha(g)=g\ \forall\alpha\in Aut(G)\}$
and call it the absolute center of the group.
\end{defn}
Since the conditions for semi-rationality require powers of $g$ prime
to the order of $g$, only elements of order $3$ or more can provide
counter-example to semi-rationality. Therefore groups with absolute
center of exponent larger than $2$ are of natural interest.

Consider the groups $C_{p}\rtimes C_{q^{m}}$ for $p,q>2$ primes
with $p\equiv1(\bmod q)$ and $m\geq2$. One can write 
\[
C_{p}\rtimes C_{q^{m}}=\langle s,t|\ t^{p}=s^{q^{m}}=1,\ s^{-1}ts=t^{k}\rangle
\]
 where $k$ is of order $q$ modulo $p$. It can be shown by a straightforward
argument that $Z^{*}(C_{p}\rtimes C_{q^{m}})=\langle s^{q^{m-1}}\rangle$.
So the exponent of the absolute center of $C_{p}\rtimes C_{q^{m}}$
has exponent $q$.

Indeed several of the groups of the form $C_{p}\rtimes C_{q^{m}}$
are not semi-rational. Below is a list of such groups with a word
for which $N_{w,G}$ is not a generalized character, provided by direct
computations:

$C_{7}\rtimes C_{9}:\ w=x_{1}x_{2}x_{1}^{2}x_{2}^{5}$

$C_{11}\rtimes C_{25}:\ w=x_{1}x_{2}^{2}x_{1}^{4}x_{2}^{3}$

$C_{13}\rtimes C_{9}:\ w=x_{1}x_{2}^{2}x_{1}^{2}x_{2}^{7}$

$C_{19}\rtimes C_{9}:\ w=x_{1}x_{2}x_{1}^{2}x_{2}^{8}$

$C_{29}\rtimes C_{49}:\ w=x_{1}x_{2}x_{1}^{6}x_{2}^{6}$

$C_{31}\rtimes C_{9}:\ w=x_{1}^{2}x_{2}^{-7}x_{1}^{-8}x_{2}^{4}$

$C_{37}\rtimes C_{9}:\ w=x_{1}x_{2}^{-10}x_{1}^{-10}x_{2}^{4}.$

This raises the questions:
\begin{question}
\label{ques:CpCq^m}Is every group of the form $C_{p}\rtimes C_{q^{m}}$
not semi-rational for $m\geq2$?
\end{question}

\begin{question}
\label{ques:abs-center}Is every group with absolute center of exponent
larger than $2$ not semi-rational?
\end{question}

We note that $C_{19}\rtimes C_{9}$ is isomorphic to a normal subgroup
of $C_{19}\rtimes C_{18}$, and hence a normal subgroup of a semi-rational
group need not be semi-rational itself. A related question is:
\begin{question}
\label{ques:quotient-SR}Is every quotient of a semi-rational group
semi-rational itself?
\end{question}
\end{sloppy}

\bibliographystyle{alpha}
\bibliography{OnSemiRationalGroups}

\begin{thebibliography}{Lub14}

\bibitem[AV11]{amit2011characters}
Alon Amit and Uzi Vishne.
\newblock Characters and solutions to equations in finite groups.
\newblock {\em Journal of Algebra and its Applications}, 10(04):675--686, 2011.

\bibitem[GS15]{guralnick2015rational}
Robert Guralnick and Pavel Shumyatsky.
\newblock On rational and concise words.
\newblock {\em Journal of Algebra}, 429:213--217, 2015.

\bibitem[Lub14]{lubotzky2014images}
Alexander Lubotzky.
\newblock Images of word maps in finite simple groups.
\newblock {\em Glasgow Mathematical Journal}, 56(2):465--469, 2014.

\bibitem[Sta86]{stanley1986enumerative}
Richard~P Stanley.
\newblock Enumerative combinatorics, wadsworth publ.
\newblock {\em Co., Belmont, CA}, 1986.

\bibitem[Wil09]{wilson2009finite}
Robert Wilson.
\newblock {\em The finite simple groups}, volume 251.
\newblock Springer Science \& Business Media, 2009.

\end{thebibliography}

\emph{Einstein Institute of Mathematics}

\emph{The Hebrew University of Jerusalem}

\emph{Jerusalem 91904}

\emph{Israel}

$ $

tzoor.plotnikov@mail.huji.ac.il
\end{document}